\numberwithin{equation}{section}
\newcommand{\be}{\begin{eqnarray}}
\newcommand{\ee}{\end{eqnarray}}
\newcommand{\ce}{\begin{eqnarray*}}
\newcommand{\de}{\end{eqnarray*}}
\newtheorem{theorem}{Theorem}[section]
\newtheorem{lemma}[theorem]{Lemma}
\newtheorem{remark}[theorem]{Remark}
\newtheorem{definition}[theorem]{Definition}
\newtheorem{prop}[theorem]{Proposition}
\newtheorem{example}[theorem]{Example}
\newtheorem{corollary}[theorem]{Corollary}
\def\eps{\varepsilon}
\def\a{\alpha}
\def\d{\delta}
\def\[{{\Big[}}
\def\]{{\Big]}}
\def\<{{\langle}}
\def\>{{\rangle}}
\def\({{\Big(}}
\def\){{\Big)}}
\def\bx{{\mathbf{x}}}
\def\dif{{\mathord{{\rm d}}}}
\def\min{{\mathord{{\rm min}}}}
\def\no{\nonumber}
\def\={&\!\!=\!\!&}
\def\cK{{\mathcal K}}
\def\mR{{\mathbb R}}
\def\mS{{\mathbb S}}
\def\bE{{\mathbf E}}
\def\1{{\mathbf{1}}}
\def\sB{{\mathscr B}}
\def\sK{{\mathscr K}}
\def\sP{{\mathscr P}}
\def\E{\mathbb E}
\def\geq{\geqslant}
\def\leq{\leqslant}
\def\bd{d}
\def\eps{\varepsilon}
\def\a{\alpha}
\def\d{\delta}
\def\[{{\Big[}}
\def\]{{\Big]}}
\def\<{{\langle}}
\def\>{{\rangle}}
\def\({{\Big(}}
\def\){{\Big)}}
\def\bx{{\mathbf{x}}}
\def\dif{{\mathord{{\rm d}}}}
\def\min{{\mathord{{\rm min}}}}
\def\no{\nonumber}
\def\={&\!\!=\!\!&}
\def\bt{\begin{theorem}}
\def\et{\end{theorem}}
\def\bl{\begin{lemma}}
\def\el{\end{lemma}}
\def\br{\begin{remark}}
\def\er{\end{remark}}
\def\bx{\begin{Examples}}
\def\ex{\end{Examples}}
\def\bd{\begin{definition}}
\def\ed{\end{definition}}
\def\bp{\begin{proposition}}
\def\ep{\end{proposition}}
\def\bc{\begin{corollary}}
\def\ec{\end{corollary}}
\def\bpf{\begin{proof}}
\def\epf{\end{proof}}
\def\geq{\geqslant}
\def\leq{\leqslant}
 \def\R{\mathbb R}
 \def\R{\mathbb R}
\def\<{\langle} \def\>{\rangle} \def\GG{\Gamma} \def\gg{\gamma}
\def\d{\text{\rm{d}}}   
 \def\beq{\begin{equation}}
 \def\P{\mathbb P}
\begin{document}
\title{\bf{Weak uniqueness for SDEs driven by supercritical stable processes with H\"older drifts}}

\author{Guohuan Zhao}

\address{Guohuan Zhao:
Applied Mathematics, Chinese Academy of Science,
Beijing, 100081, P.R.China\\
Email: zhaoguohuan@gmail.com
 }

\thanks{Research of GH is partially supported by National Postdoctoral Program for Innovative Talents (201600182) of China.}

\begin{abstract}
In this paper, we investigate stochastic differential equations(SDEs) driven by a class of  supercritical $\a$-stable process(including the rotational symmetric $\a-$stable process) with drift $b$. The weak well-posedness is proved, provided that  the $(1-\a)$-H\"older semi-norm of $b$ is sufficient small.

\noindent 
\textbf{Keywords}: $\a$-stable process, stochastic differential equation, martingale problem, viscosity solution \\

\noindent
\textbf{AMS Subject Classifications}: 60G52, 60H10, 60H30
\end{abstract}

\maketitle
\section{Introduction and main result}
The main purpose of this paper is to establish the weak well-posedness for   stochastic differential equations
driven by a class of symmetric supercritical $\alpha$-stable process with non-Liptchitz drift $b$, here supercritical means the index $\a\in(0,1)$. More precisely, we are concerning with the following SDE:

\begin{equation}\label{SDE}
\d X_t=b(X_t)\d t+\d Z_t;\quad X_0=x \in \R^d, 
\end{equation}
here $b:\mR^d\to\mR^d$ is a Borel measurable function, and $Z$ is an $\a$-stable process with L\'evy measure $\mu$. 
$$
\mu(A)=\int^\infty_0\left(\int_{\mS^{d-1}}\frac{1_A (r\theta)\Sigma(\dif\theta)}{r^{1+\alpha}}\right)\dif r,\quad A\in\sB(\mR^d),
$$
where $\Sigma$ is a finite measure over the unite sphere $\mS^{d-1}$ in $\R^d$.

When $Z_t$ is a Brownian motion (which corresponds to $\a=2$), the weak uniqueness of \eqref{SDE} can be proved by well know Girsanov transform. However, things become quite different when $Z$ is a pure jump process. When  $Z$ is a rotational symmetric $\a$-stable process and $\a\in(1,2)$, using heat kernel estimates and perturbation argument, in \cite{Ch-Wa} Chen and Wang proved the well-posedness for \eqref{SDE}  even when the drift term $b$ belongs to a certain Kato class of the rotationally symmetric $\a-$stable process $Z$ on $\R^d$. In \cite{tanaka1974perturbation},  Tanaka etc. showed that if $Z$ is a 1-dimensional supercritical symmetric $\alpha-$stable process and $b\in C^\beta$ with $\beta>1-\alpha$, then weak well-posedness  holds. Simultaneously, pathwise uniqueness follows by Proposition 1.1 of  their work.  However, for the same noise, when $\beta<1-\alpha$, they showed that if $b(x)=(1\wedge |x|^\beta)I_{\{x\geq0\}}-(1\wedge |x|^\beta)I_{\{x<0\}}$ and $X_0=0$, the weak uniqueness fails. When $d\geq 1$ and $\beta=1-\alpha$, as far as to our knowledge, there is no results until now. The main purpose of this work is trying to give a partial answer to this.  

\medskip 

Unfortunately, we can not prove the result for all non-degenerate $\a-$stable processes. An technical assumption on the L\'evy measure $\mu$ of $Z$ is needed in this paper: 
\begin{enumerate}[{\bf (H$^\theta_\kappa$)}]
\item There are two constant $\kappa>0$ and $\theta\in\left(\arccos\big(\sqrt{1/(2-\a)}\big), \pi/4\right)$ such that for any vector $d\in \R^d$, one can find a  cone $$S(\pmb{n}_{d},\theta):=\left\{x:  \<x, \pmb{n}_d\>>|x|\cos (\tfrac{\theta}{2}) \right\}$$  with vertex $0$, apex angle $\theta$ and symmetry axis $\pmb{n}_d\in \mS^{d-1}$ containing $d$ and for any $\delta\in (0,1)$, 
$$\int_{\{ y\in B_\delta\cap S(\pmb{n}_{d},\theta)\}}|y|^2\mu(\d y)\geq \kappa \delta^{2-\alpha}. $$
\end{enumerate}

Our main result is 
\begin{theorem}\label{Weak uniqueness}
Suppose $\mu$ is symmetric and satisfies {\bf (H$^{\theta}_\kappa$)},   then there exists a constant $\eps_0>0$ such that  if $b$ is bounded and 
\begin{equation}\label{b-condition}
\lim_{\delta\downarrow 0}\sup_{0<|x-y|<\delta }\frac{|b(x)-b(y)|}{|x-y|^{1-\alpha}}<\eps_0, 
\end{equation}
then \eqref{SDE} has a unique weak solution.
\end{theorem}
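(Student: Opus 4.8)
The plan is to reduce weak uniqueness to the solvability, in a sufficiently strong sense, of the associated Kolmogorov/resolvent equation
\[
\l u - \sL^{\a} u - b\cdot\nabla u = f,\qquad \sL^{\a}u(x)=\int_{\R^d}\bigl(u(x+y)-u(x)-\1_{|y|\le1}\,y\cdot\nabla u(x)\bigr)\,\mu(\d y),
\]
and then to run the standard martingale-problem machinery. Concretely, I would proceed as follows. First, existence of a weak solution: since $b$ is bounded and measurable and $Z$ is a non-degenerate $\a$-stable process, existence of a solution to the martingale problem for $\sL:=\sL^{\a}+b\cdot\nabla$ follows from a routine tightness/compactness argument (Krylov-type or via the Skorokhod representation), so the real content is uniqueness. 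For uniqueness, it suffices (by the classical argument of Stroock--Varadhan, adapted to jump processes) to show that for a dense enough class of right-hand sides $f$ there is a function $u$ solving $\l u-\sL u=f$ in a sense that makes $M_t:=u(X_t)-u(X_0)-\int_0^t(\sL u)(X_s)\,\d s$ a martingale under any solution $\bP_x$ of the martingale problem; then $\mE_x\int_0^\infty \e^{-\l t}f(X_t)\,\d t=u(x)$ is determined, and since such $f$'s determine the one-dimensional distributions and hence (by Markov property) the law, uniqueness follows.

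The heart of the matter is therefore the a priori estimate for the resolvent equation, and here the scaling is critical ("supercritical"): $\sL^\a$ has order $\a<1$, the drift $b\cdot\nabla$ has order $1$, so the drift is the \emph{dominant} term and cannot be treated as a lower-order perturbation in the usual way. This is exactly why one needs $b\in C^{1-\a}$ (so that $b\cdot\nabla u$ and $\sL^\a u$ sit in the same Hölder scale when $u\in C^{1}$, since $\sL^\a:C^{1+\a-}\to C^{1-}$ morally) and why the Hölder semi-norm must be \emph{small}: the fixed-point/continuity argument that builds $u$ requires the operator norm of the perturbation to be strictly less than $1$, which forces $\eps_0$ small. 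I would set up a Schauder-type estimate: for the equation $\l u-\sL^\a u=g$ one has $\|u\|_{C^{1+\a}}\le C\|g\|_{C^{1}}$ type bounds (or the appropriate Besov/Zygmund version), then write the full equation as $\l u-\sL^\a u = f + b\cdot\nabla u$ and close the loop, getting $\|\nabla u\|_{C^{1-\a}}\le C\|f\|_\infty+C\,[b]_{1-\a}\,\|\nabla u\|_{C^{1-\a}}$, whence $\|\nabla u\|_{C^{1-\a}}\le 2C\|f\|_\infty$ provided $C[b]_{1-\a}<1/2$, i.e. $\eps_0\le 1/(2C)$. Since \eqref{b-condition} only controls the semi-norm near the diagonal, I would first mollify $b$ and split $b=b_1+b_2$ with $b_1$ smooth and $[b_2]_{1-\a}$ small, handling the smooth bounded part $b_1\cdot\nabla$ by a compactness/lower-order argument and reserving the smallness budget for $b_2$.

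The subtlety that the assumption {\bf (H$^\theta_\kappa$)} is designed to handle is that $\mu$ need not be rotationally symmetric, so one does not have clean explicit heat kernels; instead {\bf (H$^\theta_\kappa$)} provides, in every direction, a cone along which $\mu$ has full $\a$-stable-type mass, which is precisely what is needed to prove the nondegeneracy/regularization estimate $\sL^\a:C^{1+\a}\hookrightarrow C^{1}$ (equivalently a lower bound on the symbol along all directions, or a barrier/viscosity-solution comparison — note the paper lists "viscosity solution" among its keywords, suggesting the Schauder estimate is proved by a viscosity-solution comparison argument rather than Fourier analysis). I expect \textbf{this Hölder regularity estimate for $\l u-\sL^\a u=g$ under the mere cone condition {\bf (H$^\theta_\kappa$)}} to be the main obstacle: without radial symmetry one must argue geometrically, using the cone to test the nonlocal operator against barrier functions, and the peculiar angular restriction $\theta\in(\arccos\sqrt{1/(2-\a)},\pi/4)$ is almost certainly dictated by making such a barrier computation close. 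Once that estimate is in hand, the remainder — mollification splitting, the contraction to produce $u\in C^1$, verifying the martingale identity for $u$ (which needs $u\in C^1$ plus $\sL^\a u$ well-defined, using $\a<1$ so that only first derivatives are required), and the Stroock--Varadhan conclusion — is standard.
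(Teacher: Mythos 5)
Your proposal takes a genuinely different route from the paper, and the step you yourself flag as the ``main obstacle'' is left as a real gap, not merely an omitted-for-brevity lemma.

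The paper does \emph{not} build a classical $C^1$ solution of the resolvent equation via Schauder estimates and a contraction. Following Costantini--Kurtz, it instead introduces the two envelopes
\begin{equation*}
u(x):=\sup_{\P\in\Gamma_x}\bE^\P\!\!\int_0^\infty e^{-\lambda t}f(X_t)\,\d t,\qquad
v(x):=\inf_{\P\in\Gamma_x}\bE^\P\!\!\int_0^\infty e^{-\lambda t}f(X_t)\,\d t,
\end{equation*}
shows that $u$ is a bounded viscosity \emph{sub}solution and $v$ a bounded viscosity \emph{super}solution of $\lambda w-Aw=f$ (Lemma~\ref{sub-super}, resting on a compactness argument for $\bigcup_{\nu\in\cK}\Gamma_\nu$), proves decay at infinity (Lemma~\ref{zero}), and then proves a \emph{comparison principle} $u\le v$ (Lemma~\ref{Comparison}). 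Since $v\le u$ trivially, $u=v$ and uniqueness follows from Theorem~4.4.2 of Ethier--Kurtz. No Schauder theory, no classical solution, and no fixed point. Your route is the Zvonkin/Schauder one of the cited references \cite{Ch-So-Zh}, \cite{Ch-Zh-Zh}, and the reason the paper does not take it is visible from its Example~1.3: hypothesis {\bf (H$^\theta_\kappa$)} permits a L\'evy measure concentrated on a finite union of lines through the origin, for which a full $C^{1+\alpha}$-type Schauder estimate for $\lambda u-\mathcal{L}u=g$ is not available (and would be remarkable if true). Those references prove such estimates under much stronger nondegeneracy hypotheses on $\mu$ (absolutely continuous, comparable to the rotationally symmetric kernel). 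The viscosity comparison principle is a strictly weaker, one-sided, pointwise statement, and that weakening is exactly what lets the paper handle such degenerate $\mu$. So as written, your argument would not cover the class of processes the theorem actually addresses; at the very least you would have to supply the Schauder estimate under the cone condition, and I do not believe it holds in that generality.

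Where your intuition is squarely correct is in predicting a geometric ``barrier against a cone'' computation as the engine. That is precisely the content of Lemma~\ref{Comparison}: one doubles variables with the penalty $L|x-y|^\gamma$ for a carefully chosen $\gamma$, applies the nonlocal operator to the penalty, and integrates over $S(\pmb{n}_d,\theta,\delta)$; by {\bf (H$^\theta_\kappa$)} this produces a term $-\eps_0 L|d|^{\gamma-\alpha}$, and once $[b]_{1-\alpha}<\eps_0$ this absorbs the drift contribution $[b]_{1-\alpha}L|d|^{\gamma-\alpha}$ and yields a contradiction. The peculiar restriction on $\theta$ is exactly what makes an admissible penalty exponent $\gamma$ exist and the sign in the cone computation come out right. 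Your mollification split $b=b_1+b_2$ is a reasonable alternative to the paper's brief appeal to localization in passing from the global bound $[b]_{1-\alpha}<\eps_0$ used in Lemma~\ref{Comparison} to the merely local condition \eqref{b-condition}. Finally, a small point in your sketch: the martingale problem in Definition~\ref{Def[MP]} is posed over $C^2_b$ test functions, so even if you had $u\in C^1_b$ you would still need an approximation step to deduce that $t\mapsto u(X_t)-\int_0^t(\lambda u-f)(X_s)\,\d s$ is a martingale under an arbitrary $\P_x\in\Gamma_x$; routine, but it should be said.
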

Roughly speaking, our result says if $\mu$ satisfies {\bf (H$^\theta_\kappa$)} and the $(1-\a)$-H\"older semi-norm of $b$ is sufficient small, then \eqref{SDE} is well-posed. 
Before motiving on, let us give two examples.
\begin{example}
Suppose $d\geq 1$, $b(x)=1\wedge|x|^{1-\alpha}|\log|x||^{-1}$, $Z_t$ is an rotational symmetric $\a$-stable process in $\R^d$ i.e. $\mu(\d z)=|z|^{-d-\a}\d z$. It is easy to verify that $\mu$ satisfies {\bf {H}$^\theta_\kappa$} and $b$ satisfies \eqref{b-condition}, but for any $\beta>1-\a$, b is not in the H\"older space $C^\beta$. 
\end{example}

\begin{example}
Suppose $d=2$, 
$$\mu(dx)= \sum_{i=0}^{[\pi/\vartheta]}\1_{\{x_2=\tan (i\vartheta)x_1\}} \frac{dx_1}{\cos(i\vartheta)|x|^{1+\alpha}}, $$
here $\frac{\pi}{2\vartheta}\notin \mathbb{N}$ and $0<\vartheta<2\arccos\sqrt{\frac{1}{2-\alpha}}$, then we can also verify that $\mu$ satisfies {\bf {H}$^\theta_\kappa$}(Notice that in this case $\mu$ is very singular). But unfortunately,  if the support of the singular L\'evy measure is ``sparser", for instance  
$$\mu(dx)=\1_{\{x_2=0\}}\frac{dx_1}{|x_1|^{1+\alpha}}+\1_{\{x_1=0\}}\frac{dx_2}{|x_2|^{1+\alpha}} $$
i.e. $Z_t=(Z^1_t, Z^2_t)$ with $Z^1, Z^2$ are two independent one dimensional symmetric $\a$-stable processes, then the L\'evy process $\mu$ does not  satisfy the assumption {\bf {H}$^\theta_\kappa$}. 

\end{example}

\begin{remark}
The following two problems are quite interesting and we do not know the answers.  
\begin{enumerate}
  \item If $\mu(\d z)=|z|^{-d-\a}\d z$, is there a function $b\in C^{1-\alpha}(\R^d)$ such that  weak uniqueness for \eqref{SDE} fails?
  \item For any $d\geq 2$ and $\mu(\d z)=\sum_{i=1}^d \1_{\{z_1=0,\cdots,\hat z_i=0,\cdots,z^n=0\}} |z_i|^{-d-\a} {\d z_i}$,  can one show the weak uniqueness?
\end{enumerate}
\end{remark}

\medskip
Now let us give a brief introduce to our approach in this work. Since we assume the coefficient $b$ is continuous, one can get the weak existence of \eqref{SDE} easily. So the key is the uniqueness. It is  well know that the weak solution to SDEs are close relate to martingale problem(see Definition \ref{Def[MP]} below).  Denote $A:=\mathcal{L}+b\cdot\nabla,$ where $\mathcal{L}$ is the generator of $Z_t$, define 
$$
\Gamma_x:=\left\{ \hbox{ all the solutions to martingale problem}\ (A,\delta_x)\right\}. 
$$
By  Theorem 4.4.2 of \cite{ethier2009markov}, in order to prove the uniqueness of weak solution, one just need to show: for any $\lambda>0,~x\in\R^d,~f\in C^\infty_c$ and $\P_x,~ \P'_x\in \Gamma_x:=\big\{$ all solutions to martingale problem $(A,\delta_x)$ $\big\}$
\begin{equation}\label{eql}
\bE^{\P_x}\left[\int_0^\infty e^{-\lambda t}f(X_t)dt\right]=\bE^{\P'_x} \left[\int_0^\infty e^{-\lambda t}f(X_t)dt\right]. 
\end{equation}

When $b$ is smooth,  $\GG_x$ contains only one element $\P_x$, and $u(x)=\P_x\left[\int_0^\infty e^{-\lambda t}f(X_t)dt\right]$ is the unique classical solution of following resolvent equation:
\begin{equation}\label{Resolvent eq}
\lambda u-Au=f. 
\end{equation}
Conversely, if the above equation has a classical solution, then the well-posedness of martingale problem holds. 
In PDE literatures, the existence and uniqueness of classical solutions to linear equations obtained by priori estimates. In general, these estimates in particular function space are not easy, especially for nonlocal equations. There are several references on this topic, for instants, \cite{silvestre2010differentiability}, \cite{Ch-So-Zh} and \cite{Ch-Zh-Zh}. In 1980's Michael G Crandall and Pierre-Louis Lions introduced the concept of viscosity solution. Generally, the existence and uniqueness of this kind of weak sense solutions can be obtained as soon as one get the comparison principle (see \cite{crandall1992user} for details). The development of viscosity solutions for nonlocal nonlinear equations can be found in \cite{barles2008second} and \cite{barles2010h}. The closed relationship between viscosity solution and martingale problem were studied in \cite{Fe-Ku} and \cite{costantini2014viscosity}.  Our method is mainly  inspired by \cite{costantini2014viscosity},  in section 3, we will establish the comparison principle for subviscosity solutions and superviscosity solutions to resolvent equation \eqref{Resolvent eq} under some technical assumptions which we believe can be reduced, then by the similar argument in \cite{costantini2014viscosity}, we show  \eqref{eql} holds, using this we get the weak uniqueness of \eqref{SDE}. And in order to keep the statement simple,   we assume the process $Z_t$ is symmetric $\a-$stable process for simple, but the methods used here can be applied to more general case, even for $Z_t$ is a jump Markov process satisfying some suitable conditions.

\medskip
Let us also mention that there are many literatures study strong solution to \eqref{SDE}. Priola proved the strong existence and uniqueness for \eqref{SDE} in \cite{Pr1} by using Zvonkin's transform, under the following conditions:  $Z_t$ is a rotational symmetry $\alpha-$stable processes with $\alpha\geq 1$ and $b\in C^\beta $ with $\beta>1-\frac{\alpha}{2}$. 
In \cite{Zh1}, Zhang considered the case that $b$ belongs to some fractional Sobolev spaces. For a large class of L\'evy processes, Chen, Song and Zhang  proved established strong existence and pathwise uniqueness for \eqref{SDE}  in their work \cite{Ch-So-Zh}, when $b$ is time dependent, H\"older continuous in $x$. Very recently, the similar result was improved in \cite{Ch-Zh-Zh}. Therein,
the authors not only extend the main result of \cite{Pr1} and \cite{Pr2} for the subcritical and critical case ($\alpha \in [1,2)$) to more general L\'evy processes and time-dependent drifts, but also establish strong existence and pathwise uniqueness for the supercritical case.

\medskip

We close this section by mentioning 
some conventions used throughout this paper:
We use $:=$ as a way of definition. For $a, b\in \mR$, $a\vee b:= \max \{a, b\}$ and $a\wedge b:=\min \{a, b\}$,
The letter $c$ or $C$ with or without subscripts stands for an unimportant constant, whose value may change in difference places.

\section{Some definitions}
In this section, we give some important definitions that will be used later. Let $D(\R_+; \R^d)$ be the c\`adl\`ag space with Skorokhord topology and  denote $X_t(\omega)=\omega(t)$ for any $\omega\in D(\R_+; \R^d)$ hereinbelow. The following is the definition of martingale problem.
\begin{definition}\label{Def[MP]}
 Given a linear operator $A:C^2_b(\R^d)\to C_b(\R^d)$ and a probability measure $\nu$ on $\R^d$. Martingale problem for $(A, \nu)$ consists in finding a probability measure $\P$  over the c\`adl\`ag space $D(\R_+; \R^d)$ such that for any $B\in \sB(\R^d)$, $\P(X_0\in B)=\nu (B)$ and whenever $f\in C^2_b(\R^d)$, we have that
$$
t\mapsto f(X_t)-f(X_0)-\int_0^t Af(X_s)\d s
$$
is a  martingale under $\P$. 
\end{definition}

Next, we introduce the definition of viscosity solution for general resolvent equation.
\begin{definition}\label{Def[CK]}
\begin{enumerate}
  \item An upper semi-continuous bounded function $u$ is a viscosity subsolution to $\lambda u-Au=f$ (or $\lambda u-Au\leq f$) if for any test function $\phi\in C^2_b(\R^d)$ touches $u$ from above at $x_0$, 
$$\lambda u(x_0)-A\phi(x_0)\leq f(x_0).$$
  \item A lower semi-continuous bounded function $v$ is a viscosity supersolution to $\lambda v-Av=f$(or $\lambda v-Av\geq f$) if for any test function $\phi\in C^2_b(\R^d)$ touches $v$ from below at $x_0$ and $$\lambda v(x_0)-A\phi(x_0)\geq f(x_0).$$
\end{enumerate}
\end{definition}
From the next easy lemma, we can see the close relationship between solutions to the martingale problem and viscosity solution to related resolvent equations.
\begin{lemma}
For any $\lambda>0$ and $ x\in \R^d$, suppose $(X_t,\P_x)$ is a martingale solution to $(A,\delta_x)$.
\begin{enumerate}
  \item[(i)] 
If $u\in C^2_b(\R^d)$ is a classical solution to \eqref{Resolvent eq}, then $$t\mapsto u(X_t)-\int_0^t \left[\lambda u(X_s)-f(X_s)\right]\d s$$
is a martingale under $\P_x$.
  \item[(ii)] $u, f\in C_b(\R^d)$ and the process $$t\mapsto u(X_t)-\int_0^t \left[\lambda u(X_s)-f(X_s)\right]\d s$$
is a martingale under $\P_x$. Then $u$ is a viscosity solution to \eqref{Resolvent eq}.
\end{enumerate}
\end{lemma}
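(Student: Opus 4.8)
For part (i) the plan is to read the claim off directly from the definition of the martingale problem. Since $u\in C^2_b(\R^d)$ it is an admissible test function, so Definition \ref{Def[MP]} (with $\nu=\delta_x$) already gives that
$$
t\mapsto u(X_t)-u(X_0)-\int_0^t Au(X_s)\,\d s
$$
is a $\P_x$-martingale, noting $Au\in C_b(\R^d)$ because $A\colon C^2_b\to C_b$. I would then insert the hypothesis $Au=\lambda u-f$ and use $X_0=x$ $\P_x$-a.s., so that $u(X_0)=u(x)$ is a constant; since subtracting a constant preserves the martingale property, $t\mapsto u(X_t)-\int_0^t[\lambda u(X_s)-f(X_s)]\,\d s$ is a $\P_x$-martingale. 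This is immediate and presents no difficulty.

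For part (ii) the plan is to show that $u$ is both a viscosity sub- and a viscosity supersolution, the two halves being mirror images; I describe the subsolution half. Fix $x_0\in\R^d$ and a test function $\phi\in C^2_b(\R^d)$ touching $u$ from above at $x_0$, i.e.\ $\phi\ge u$ on $\R^d$ with $\phi(x_0)=u(x_0)$, and work under $\P_{x_0}$ (the hypothesis ``$(X_t,\P_x)$ is a martingale solution of $(A,\delta_x)$'' being invoked at the starting point $x=x_0$). Taking expectations in the assumed martingale identity, after restoring the constant $u(x_0)$, gives
$$
\bE^{\P_{x_0}}[u(X_t)]-u(x_0)=\bE^{\P_{x_0}}\Big[\int_0^t\big(\lambda u(X_s)-f(X_s)\big)\,\d s\Big],
$$
while Definition \ref{Def[MP]} applied to the admissible test function $\phi$ gives
$$
\bE^{\P_{x_0}}[\phi(X_t)]-\phi(x_0)=\bE^{\P_{x_0}}\Big[\int_0^t A\phi(X_s)\,\d s\Big].
$$
Since $\phi\ge u$ everywhere, $\bE^{\P_{x_0}}[\phi(X_t)]\ge\bE^{\P_{x_0}}[u(X_t)]$, and $\phi(x_0)=u(x_0)$; subtracting the two identities yields
$$
\bE^{\P_{x_0}}\Big[\int_0^t A\phi(X_s)\,\d s\Big]\ge\bE^{\P_{x_0}}\Big[\int_0^t\big(\lambda u(X_s)-f(X_s)\big)\,\d s\Big].
$$
I would then divide by $t$ and let $t\downarrow0$: because $A\phi$ and $\lambda u-f$ lie in $C_b(\R^d)$ and $s\mapsto X_s$ is c\`adl\`ag with $X_0=x_0$ (so $X_s\to x_0$ $\P_{x_0}$-a.s.\ as $s\downarrow0$), Fubini and dominated convergence give $\frac1t\bE^{\P_{x_0}}[\int_0^t A\phi(X_s)\,\d s]\to A\phi(x_0)$ and $\frac1t\bE^{\P_{x_0}}[\int_0^t(\lambda u-f)(X_s)\,\d s]\to\lambda u(x_0)-f(x_0)$. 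Hence $A\phi(x_0)\ge\lambda u(x_0)-f(x_0)$, that is $\lambda u(x_0)-A\phi(x_0)\le f(x_0)$, which is exactly the subsolution inequality of Definition \ref{Def[CK]}. Repeating the argument with ``from above / $\ge$'' replaced by ``from below / $\le$'' produces $\lambda u(x_0)-A\phi(x_0)\ge f(x_0)$ for test functions touching from below, so $u$ is also a viscosity supersolution; as $u\in C_b(\R^d)$ is both upper and lower semicontinuous, $u$ is a viscosity solution to \eqref{Resolvent eq}.

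The main point requiring care — the nearest thing to an obstacle — is that the inequality $\bE^{\P_{x_0}}[\phi(X_t)]\ge\bE^{\P_{x_0}}[u(X_t)]$ uses $\phi\ge u$ \emph{globally} on $\R^d$, not merely near $x_0$; this is dictated by the nonlocality of $A$ and is consistent with Definition \ref{Def[CK]}, where the untruncated expression $A\phi(x_0)$ appears, so I would read ``$\phi$ touches $u$ from above'' as global domination $\phi\ge u$ with equality at $x_0$. The second point is the legitimacy of the $t\downarrow0$ limit, for which the boundedness of $A\phi$, $u$, $f$ together with right-continuity of the paths at $0$ is precisely what is needed. The rest is bookkeeping.
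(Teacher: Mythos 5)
Your proof is correct and takes essentially the same route as the paper's: part (i) is the definition of the martingale problem applied to the test function $u$ together with the substitution $Au=\lambda u-f$, and part (ii) compares the martingale identity for $u$ with the one for the test function $\phi$, uses $\phi\ge u$ globally with $\phi(x_0)=u(x_0)$, divides by $t$ and lets $t\downarrow 0$. You merely make explicit the dominated-convergence/right-continuity step and the global reading of ``touches from above'' that the paper leaves implicit.
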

\begin{proof}
\begin{enumerate}
\item By the definition of martingale problem, 
$$
t\mapsto u(X_t)-\int_0^t Au(X_s)\d s=u(X_t)-\int_0^t \left[\lambda u(X_s)-f(X_s)\right]\d s
$$
is a martingale. 
\item Suppose $\varphi\in C^2_b(\R^d)$ touches $v$ from above at $x_0$, then
$$0\geq t^{-1} \E_{x_0}[u(X_t)-\varphi(X_t)]=t^{-1} \E_{x_0}\int_0^t \lambda u(X_s)-f(X_s)-A\varphi(X_s)\d s$$
Let $t\rightarrow 0$, we get
$$\lambda u(x_0)-A\varphi(x_0)\leq f(x_0)$$
By this, we get $u$ is a viscosity subsolution. The some argument shows $u$ is also a viscosity supersolution, so we complete our proof.
\end{enumerate}
\end{proof}

We need anther definition of viscosity sub(super)solution for later use. Let us introduce two auxiliary operators: $\mathcal{L}^\delta$, $\mathcal{L}_{\delta}$, 
\begin{align*} 
&\mathcal{L}^\delta\phi(x):=\int_{|y|\leq \delta}[\phi(x+y)-\phi(x)-\nabla\phi(x)\cdot y 1_{B_1}(y)]\mu(\d y); \\
&\mathcal{L}_\delta\phi(x):=\int_{|y|> \delta}(\phi(x+y)-\phi(x))\mu(\d y). 
\end{align*}
The following definition can be find in \cite{barles2008second}.

\begin{definition}\label{Def[BI]}
\begin{enumerate}
  \item An upper semi-continuous bounded function $u$ is a viscosity subsolution to $\lambda u-Au=f$(or $\lambda u-Au\leq f$) if for any test function $\phi\in C^2(B(x_0,\delta))$, $x_0$ is a local maximum of $u-\phi$ in $B(x_0,\delta)$, then  $$\lambda u(x_0)-b(x_0)\cdot\nabla\phi(x_0)-\mathcal{L}^\delta\phi(x_0)-\mathcal{L}_\delta u(x_0)\leq f(x_0).$$
  \item A lower semi-continuous bounded function $v$ is a viscosity supersolution to $\lambda v-Av=f$(or $\lambda v-Av\leq f$) if for any test function $\phi\in C^2(B(x_0,\delta))$, $x_0$ is a local minimum of $v-\phi$ in $B(x_0,\delta)$, then  $$\lambda v(x_0)-b(x_0)\cdot\nabla\phi(x_0)-\mathcal{L}^\delta\phi(x_0)-\mathcal{L}_\delta v(x_0)\geq f(x_0).$$
\end{enumerate}
\end{definition}

Next we prove the equivalence of above two definitions of viscosity solution. 
\begin{prop}\label{Equivalent}
Definitions \ref{Def[CK]} and \ref{Def[BI]} are equivalent for bounded solutions.
\end{prop}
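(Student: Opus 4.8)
The plan is to prove the two notions of viscosity solution coincide by showing each direction separately, focusing on the subsolution case since the supersolution case is symmetric. The essential content is a standard but delicate localization-and-regularization argument: Definition \ref{Def[CK]} uses a single global $C^2_b$ test function that must touch $u$ from above everywhere, whereas Definition \ref{Def[BI]} uses a local $C^2$ test function on a ball $B(x_0,\delta)$ together with the ``nonlocal remainder'' $\mathcal L_\delta u$ evaluated on $u$ itself. I would first record the elementary observation that for $\phi\in C^2_b(\R^d)$ one has the decomposition $A\phi(x_0)=b(x_0)\cdot\nabla\phi(x_0)+\mathcal L^\delta\phi(x_0)+\mathcal L_\delta\phi(x_0)$ for every $\delta\in(0,1)$, since $\mathcal L=\mathcal L^\delta+\mathcal L_\delta$ on $C^2_b$ (the principal-value/compensation term only lives on $B_1$ and is absorbed into $\mathcal L^\delta$ for $\delta<1$; for $\delta\ge 1$ one adjusts the indicator, but we only need small $\delta$).

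For the implication \emph{Definition \ref{Def[CK]} $\Rightarrow$ Definition \ref{Def[BI]}}: suppose $u$ is a subsolution in the sense of \ref{Def[CK]}, let $\phi\in C^2(B(x_0,\delta))$ with $x_0$ a local max of $u-\phi$ in $B(x_0,\delta)$. The idea is to build a \emph{global} test function $\tilde\phi\in C^2_b(\R^d)$ that touches $u$ from above at $x_0$: take $\tilde\phi$ to equal $\phi$ near $x_0$ (after adding the constant $u(x_0)-\phi(x_0)$ and, if necessary, subtracting a small multiple of $|x-x_0|^2$ to make $x_0$ a strict local max), and on the complement of a slightly smaller ball set $\tilde\phi$ to be a large bounded $C^2$ function lying strictly above $u$ (possible since $u$ is bounded and u.s.c.). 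Then \ref{Def[CK]} gives $\lambda u(x_0)-A\tilde\phi(x_0)\le f(x_0)$; now split $A\tilde\phi(x_0)$ via the decomposition above. In $\mathcal L^\delta$ only values of $\tilde\phi$ on $B(x_0,\delta)$ enter, where $\tilde\phi=\phi$ (plus the harmless quadratic correction, which we let tend to zero), so $\mathcal L^\delta\tilde\phi(x_0)=\mathcal L^\delta\phi(x_0)$; and since $\tilde\phi\ge u$ globally with equality at $x_0$, we get $\mathcal L_\delta\tilde\phi(x_0)\ge\mathcal L_\delta u(x_0)$ — which is exactly the inequality needed to pass from $-\mathcal L_\delta\tilde\phi$ to $-\mathcal L_\delta u$ with the correct sign. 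This yields the \ref{Def[BI]} inequality. The converse, \emph{Definition \ref{Def[BI]} $\Rightarrow$ Definition \ref{Def[CK]}}, is easier: given $\phi\in C^2_b(\R^d)$ touching $u$ from above at $x_0$, it is in particular a legitimate local test function on any $B(x_0,\delta)$, so \ref{Def[BI]} gives $\lambda u(x_0)-b(x_0)\cdot\nabla\phi(x_0)-\mathcal L^\delta\phi(x_0)-\mathcal L_\delta u(x_0)\le f(x_0)$, and since $\phi\ge u$ with equality at $x_0$ we have $\mathcal L_\delta u(x_0)\le\mathcal L_\delta\phi(x_0)$; substituting and recombining $\mathcal L^\delta\phi+\mathcal L_\delta\phi=\mathcal L\phi$ gives $\lambda u(x_0)-A\phi(x_0)\le f(x_0)$.

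I expect the main obstacle to be the \emph{integrability/convergence bookkeeping} in the first implication, namely justifying that the globally-defined $\tilde\phi$ has $A\tilde\phi(x_0)$ well defined and that the quadratic correction term $\eta|x-x_0|^2$ can be sent to $0$ while controlling $\mathcal L^\delta(\eta|x-x_0|^2)(x_0)=\eta\int_{|y|\le\delta}|y|^2\mu(\d y)<\infty$; one must also check that adding this correction keeps $x_0$ a genuine (strict) local maximum so that the gluing with the bounded outer function is legitimate. A second, more minor point is the boundary between the $\mathbf 1_{B_1}$ compensator in the definition of $A$ (via $\mathcal L$) and the $\mathbf 1_{B_1}$ inside $\mathcal L^\delta$: since the proposition concerns all $\delta>0$ but the two definitions only ever need small $\delta$, and since for $\delta<1$ the compensator indicator is identically $1$ on $B_\delta$, the decomposition $A\phi=b\cdot\nabla\phi+\mathcal L^\delta\phi+\mathcal L_\delta\phi$ is exact; I would simply remark that it suffices to verify the equivalence using test balls of radius $\delta<1$, which loses no generality since a local maximum on $B(x_0,\delta)$ is also one on $B(x_0,\delta')$ for $\delta'<\delta$, and on the other side enlarging to $\delta\ge1$ changes $\mathcal L^\delta\phi$ and $\mathcal L_\delta u$ by exactly compensating bounded terms.
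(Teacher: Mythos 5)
Your direction \ref{Def[BI]} $\Rightarrow$ \ref{Def[CK]} is correct and is exactly what the paper does. The problem is in the direction \ref{Def[CK]} $\Rightarrow$ \ref{Def[BI]}, and it is not the ``integrability/convergence bookkeeping'' you flagged as the main obstacle: it is a sign error. You construct a single global test function $\tilde\phi\in C^2_b$ with $\tilde\phi=\phi$ near $x_0$, $\tilde\phi\ge u$ everywhere, $\tilde\phi(x_0)=u(x_0)$, and then \ref{Def[CK]} gives
\begin{equation*}
\lambda u(x_0)-b(x_0)\cdot\nabla\phi(x_0)-\mathcal L^\delta\phi(x_0)-\mathcal L_\delta\tilde\phi(x_0)\le f(x_0).
\end{equation*}
You then observe $\mathcal L_\delta\tilde\phi(x_0)\ge\mathcal L_\delta u(x_0)$ and claim this is ``the correct sign'' to replace $-\mathcal L_\delta\tilde\phi(x_0)$ by $-\mathcal L_\delta u(x_0)$. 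But $-\mathcal L_\delta\tilde\phi(x_0)\le-\mathcal L_\delta u(x_0)$, so replacing the former by the latter \emph{increases} the left-hand side, and the inequality $\lambda u(x_0)-b(x_0)\cdot\nabla\phi(x_0)-\mathcal L^\delta\phi(x_0)-\mathcal L_\delta u(x_0)\le f(x_0)$ does not follow. Any single global test function touching $u$ from above can only give a tail term $\mathcal L_\delta\tilde\phi(x_0)$ that overestimates $\mathcal L_\delta u(x_0)$, which is useless here.

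The way the paper resolves this is by building a \emph{sequence} of admissible global test functions $\phi_k\in C^2_b$ that agree with $\phi$ (up to a vanishing $\frac1k|x|^2$ correction) on $B(x_0,\delta)$ but equal $u_k$ outside $B(x_0,\delta+r_k)$, where $u_k\downarrow u$ is a uniformly bounded decreasing sequence of $C^2_b$ functions approximating the bounded u.s.c.\ function $u$ from above and $r_k\downarrow 0$. Because $u_k\downarrow u$ pointwise and everything is uniformly bounded, dominated convergence yields the \emph{exact} limit $\mathcal L_\delta\phi_k(x_0)\to\mathcal L_\delta u(x_0)$, not merely a one-sided bound, and passing to the limit in the \ref{Def[CK]} inequality gives the \ref{Def[BI]} inequality. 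So the missing ingredient in your proposal is precisely this monotone approximation of $u$ from above (where upper semi-continuity and boundedness of $u$ enter), which turns the wrong-signed inequality into an equality in the limit. A small technical point the paper also handles, and which you would need to supply, is the smooth gluing of $\phi+\frac1k|x|^2$ to $u_k$ across the annulus $\delta\le|x-x_0|\le\delta+r_k$ while keeping $u-\phi_k$ globally maximized at $x_0$; this requires choosing $r_k$ small enough relative to the oscillation of $u$ near $\partial B_\delta$, which again uses upper semi-continuity.
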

\begin{proof}
\begin{enumerate}
\item[(i)]  Suppose $u$ is a bounded viscosity subsolution of resolvent equation in the sense of Definition \ref{Def[CK]}, and $\phi\in C^2(B(x_0,\delta))$ touches $u$ at $x_0$ from above. We will construct a sequence $\{\phi_k\} \subset  C^2_b(\R^d)$, which is uniformly bounded in $C^2_b(\R^d)$ such that
\begin{itemize}
\item $u-\phi_k$ attains a global maximum at $x_0$;
\item  $\mathcal{L}^\delta\phi_k(x_0)\rightarrow \mathcal{L}^\delta\phi(x_0)$, as $k\rightarrow \infty$;
\item  $\nabla\phi_k(x_0)\rightarrow \nabla\phi(x_0)$, as $k\rightarrow \infty$;
\item  $\mathcal{L}_\delta\phi_k(x_0)\rightarrow \mathcal{L}_\delta u(x_0)$, as $k\rightarrow \infty$.
\end{itemize}
Indeed, we can assume $x_0=0$. Since $u$ is bounded upper semi-continuous, there exits a sequence of $C^2_b$ functions $\{u_k\}$ which is uniformly bounded and $u_k(x)\geq u(x)+\frac{1}{k}$ such that $u_k\downarrow u$. Moreover, we can find positive constants $r_k\downarrow 0$ such that 
$$\sup_{x\in\partial B_\delta; |x-y|\leq r_k}(u(y)-u(x))\leq(1\land \delta^2)/(2k)$$
Let $\rho\geq 0$, $\rho(x)\in C^\infty([0,1])$ with $\rho(0)=1, ~\rho'(0)=\rho''(0)=\rho(1)=\rho'(1)=\rho''(1)=0$ . Define  
$\phi_k(x)=\phi(x)+\frac{1}{k}|x|^2$, if $|x|\leq \delta$; $\phi_k(x)=u_k(x)$ if $|x|>\delta+r_k$ and  
$$\phi_k(x)=\phi_k(x\delta/|x|)\cdot\rho\big((|x|-\delta)/r_k\big)+u_k\big((\delta+r_k)x/|x|\big)\cdot \big[1-\rho\big((|x|-\delta)/r_k\big)\big]$$
if $\delta\leq|x|\leq\delta+r_k$. It is not hard to verify that $\phi_k$ satisfies all the properties list above. By Definition \ref{Def[CK]}, we obtain $$\lambda u(x_0)-b(x_0)\cdot\nabla\phi_k(x_0)-\mathcal{L}^\delta\phi_k(x_0)-\mathcal{L}_\delta \phi_k(x_0)\leq f(x_0).$$
Let $k\rightarrow \infty$, we get 
$$\lambda u(x_0)-b(x_0)\cdot\nabla\phi(x_0)-\mathcal{L}^\delta\phi(x_0)-\mathcal{L}_\delta u(x_0)\leq f(x_0).$$
\item[(ii)]  Suppose $u$ is a bounded viscosity subsolution of resolvent equation in the sense of Definition \ref{Def[BI]}. If $\phi\in C_b^2$, $u-\phi$ reaches it's global maximum $0$ at $x_0$, then 
$$\lambda u(x_0)-b(x_0)\cdot\nabla\phi(x_0)-\mathcal{L}^\delta\phi(x)-\int_{|y|>\delta} [u(x_0+y)-u(x_0)]\mu(\d y)\leq 0$$
Since $\phi(x_0+y)\geq u(x_0+y),~\phi(x_0)=u(x_0)$, obviously we have 
$$\lambda u(x_0)-b(x_0)\cdot\nabla\phi(x_0)-\mathcal{L}^\delta\phi(x)-\int_{|y|>\delta} [\phi(x_0+y)-\phi(x_0)]\mu(\d y)\leq0$$
\end{enumerate}
\end{proof}

\section{Well-poseness of martingale problem}
In this section, we will give the proof of our main result. Before that we need to introduce some notations. For any metric space $(S,d)$, let $\sP(S)$ be the collection of all probability measures on $S$.  We denote 
$$
\Gamma:=\left\{\P: \P \ \mbox{is a solution to martingale problem}\  A \right\};$$
and for any $\nu\in \sP(\R^d)$, denote 
$$
\Gamma_\nu:=\left\{\P: \P \ \mbox{is a solution to martingale problem}\  (A,\nu)\right\}; \quad \Gamma_x:=\Gamma_{\delta_x}.
$$

The following lemma is just a corollary of \cite[Lemma 3.5]{costantini2014viscosity}. 
\begin{lemma}\label{sub-super}
Suppose $b$ is bounded continous function, then for any $f\in C_b(\R^d)$ the following two functions  
\begin{align} \label{u&v}
u(x):=\sup_{\P\in\Gamma_x}\bE^{\P} \left[\int_0^\infty e^{-\lambda t}f(X_t)\d t\right], \quad v(x):=\inf_{\P\in\Gamma_x}\bE^\P \left[\int_0^\infty e^{-\lambda t}f(X_t)\d t\right]. 
\end{align} 
are bounded subsolution and bounded supersolution to the resolvent equation, respectively.
\end{lemma}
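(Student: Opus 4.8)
The plan is to recognize Lemma~\ref{sub-super} as an instance of the general principle that the extremal value functions attached to a family of martingale solutions are viscosity sub- and supersolutions of the associated resolvent equation, which is exactly \cite[Lemma 3.5]{costantini2014viscosity}; the work is to check that the hypotheses of that result hold here, and I would also recall the mechanism so the deduction is transparent (I target Definition~\ref{Def[CK]}, which by Proposition~\ref{Equivalent} is no loss). First I would dispose of the elementary points: since $b$ is bounded and continuous and $\phi\in C^2_b(\R^d)$, the operator $A=\mathcal{L}+b\cdot\nabla$ sends $C^2_b(\R^d)$ into $C_b(\R^d)$; weak existence (granted because $b$ is bounded continuous) gives $\Gamma_x\neq\emptyset$ for all $x$, and the discount $\lambda>0$ bounds $\int_0^\infty e^{-\lambda t}f(X_t)\,\d t$ by $\|f\|_\infty/\lambda$, so $u$ and $v$ are well defined and bounded. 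Then I would record the two structural facts about $\{\Gamma_x\}$ that the argument uses: (a) if $x_n\to x$ and $\P_n\in\Gamma_{x_n}$, then $(\P_n)$ is tight and every weak limit point lies in $\Gamma_x$ (tightness from the usual moment/Aldous estimates for the stable martingale problem; closedness under weak limits because $A\phi\in C_b(\R^d)$ for the test functions); and (b) for $\P\in\Gamma_x$ and $h>0$, the shifted regular conditional probability of $\P$ given $\mathcal F_h$ is $\P$-a.s.\ a martingale solution started from $X_h$ --- the Markov property of martingale problems in the c\`adl\`ag setting.

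With (b) in hand, the ``easy half'' of the dynamic programming principle for $u$ follows: for $\P\in\Gamma_x$ write $\int_0^\infty e^{-\lambda t}f(X_t)\,\d t=\int_0^h e^{-\lambda t}f(X_t)\,\d t+e^{-\lambda h}\int_0^\infty e^{-\lambda t}f(X_{h+t})\,\d t$, condition on $\mathcal F_h$, and bound the conditional expectation of the last term by $u(X_h)$ via (b), to get
\begin{equation*}
\bE^{\P}\Big[\int_0^\infty e^{-\lambda t}f(X_t)\,\d t\Big]\le\bE^{\P}\Big[\int_0^h e^{-\lambda t}f(X_t)\,\d t+e^{-\lambda h}u(X_h)\Big],
\end{equation*}
hence $u(x)\le\sup_{\P\in\Gamma_x}\bE^{\P}[\int_0^h e^{-\lambda t}f(X_t)\,\d t+e^{-\lambda h}u(X_h)]$; the analogous lower bound with $\inf$ holds for $v$. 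Now I would run the standard pathwise comparison. Let $\phi\in C^2_b(\R^d)$ touch $u$ from above at $x_0$ (so $u\le\phi$ on $\R^d$ and $u(x_0)=\phi(x_0)$) and suppose, for contradiction, $\lambda u(x_0)-A\phi(x_0)>f(x_0)$; by continuity of $\lambda\phi-A\phi-f$ there are $\eps'>0$, $r>0$ with $\lambda\phi-A\phi>f+2\eps'$ on $B(x_0,r)$. For any $\P\in\Gamma_{x_0}$, applying the martingale property to $e^{-\lambda s}\phi(X_s)$ on $[0,h]$ gives
\begin{equation*}
\phi(x_0)=\bE^{\P}\big[e^{-\lambda h}\phi(X_h)\big]+\bE^{\P}\Big[\int_0^h e^{-\lambda s}\big(\lambda\phi-A\phi\big)(X_s)\,\d s\Big];
\end{equation*}
splitting the last integral according to whether $X_s\in B(x_0,r)$, bounding the piece inside $B(x_0,r)$ from below by $\int_0^h e^{-\lambda s}(f(X_s)+2\eps')\,\d s$ and the piece outside by $-\,\|\lambda\phi-A\phi-f\|_\infty$ times the time spent outside, together with the uniform small-time exit estimate $\sup_{\P\in\Gamma_{x_0}}\P(|X_s-x_0|>r)\le C_r\,s$ (obtained by applying the martingale property to a fixed smooth cutoff, whose constant depends on $r$, $\|b\|_\infty$, $\mu$ but not on $\P$), one gets, for all $h$ small enough independently of $\P$,
\begin{equation*}
\phi(x_0)\ge\bE^{\P}\Big[\int_0^h e^{-\lambda s}f(X_s)\,\d s+e^{-\lambda h}u(X_h)\Big]+\eps'\,h .
\end{equation*}
Taking the supremum over $\P\in\Gamma_{x_0}$ and comparing with the easy half of the DPP yields $u(x_0)\le u(x_0)-\eps'h$, a contradiction; hence $\lambda u(x_0)-A\phi(x_0)\le f(x_0)$, i.e.\ $u$ is a viscosity subsolution. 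The supersolution property of $v$ is the mirror image: test from below, reverse all inequalities, use the infimum in place of the supremum. To finish matching Definition~\ref{Def[CK]} I would check semicontinuity: if $x_n\to x_0$, choose $\P_n\in\Gamma_{x_n}$ with $\bE^{\P_n}[\int_0^\infty e^{-\lambda t}f(X_t)\,\d t]\ge u(x_n)-1/n$, extract via (a) a weak limit $\P\in\Gamma_{x_0}$, and use continuity of $\P\mapsto\int_0^\infty e^{-\lambda t}\bE^{\P}[f(X_t)]\,\d t$ along sequences whose limit has no fixed time of discontinuity to get $u(x_0)\ge\limsup_n u(x_n)$; symmetrically $v$ is lower semicontinuous.

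I expect the main obstacle to be not the analytic contradiction argument --- that is routine once the structure is available --- but the measure-theoretic infrastructure: proving that regular conditional probabilities of a martingale solution are again martingale solutions from the current position (so the easy half of the DPP is legitimate), and that the families $\Gamma_x$ are tight and depend (semi)continuously on $x$. In the subcritical regime these are classical, but here $\mathcal{L}$ carries an unbounded and possibly highly singular L\'evy kernel, so one must take a little care that the test-function identities, the exit estimates, and the tightness bounds all go through. These are precisely the assumptions packaged into \cite[Lemma 3.5]{costantini2014viscosity}, and verifying them in the present setting is where the real content lies.
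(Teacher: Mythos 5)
Your proposal matches the paper's proof: both reduce Lemma~\ref{sub-super} to \cite[Lemma 3.5]{costantini2014viscosity} and then check its compactness hypothesis (Condition 3.1(c) of that reference), which is precisely your structural fact (a) --- tightness of $\{\P_n\}\subset\Gamma_{x_n}$ for $x_n\to x$, proved in the paper via the decomposition $X_\cdot = X_0 + \int_0^\cdot b(X_s)\,\d s + Z_\cdot$ and Ethier--Kurtz's compactness criterion, together with closedness of the solution sets under weak limits. Your account additionally unpacks the Markov property (b), the dynamic programming step, the small-time exit estimate, and the semicontinuity of $u,v$, all of which the paper leaves implicit inside the citation (and partly in a commented-out block), but the route and the key inputs are the same.
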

\begin{proof}
By Lemma 3.5 of \cite{costantini2014viscosity}, we only need to verify the condition 3.1(c) of \cite{costantini2014viscosity}. For any $\cK\subseteq \sP(\R^d)$ is compact and $\eps>0$ we can find $K\subseteq \R^d$ is compact such that 
\be\label{EE22}
\inf_{\P\in \cup_{\nu\in \cK}  \Gamma_{\nu}}\P(X_0\in K)=\inf_{\nu\in \cK}\nu(K)\geq 1-\tfrac{\eps}{2}. 
\ee
For any $\P\in \bigcup_{\nu\in \cK}  \Gamma_{\nu}$, 
$$\P\left(X.-X_0-\int_0^{\cdot}b(X_s)\d s\in B\subseteq D(\R_+; \R^d)\right)=\P(Z.\in B), $$
noticing the distribution of $Z.$ under $\P$ is the one of rotational symmetric $\a-$stable process starts from $0$, so there exists a compact set $\sK_1\subseteq D(\R_+; \R^d)$ such that
\begin{align}\label{EE23}
\inf_{\P\in \cup_{\nu\in \cK}  \Gamma_{\nu}}\P\left(X.-X_0-\int_0^{\cdot}b(X_s)\d s\in \sK_1\right)\geq 1-\tfrac{\eps}{2}. 
\end{align}
Let $$\mathcal{C}=\left\{y\in C^1(\R_+; {\R^d}): y_0\in K, \|y'\|\leq \|b\|_\infty\right\}, $$ 
noticing 
\begin{align}\label{EE24}
\left\{X.: X_0\in K; X.-X_0-\int_0^{\cdot}b(X_s)\d s\in \sK_1\right\}\subseteq \sK:=\left\{X.: X.\in \mathcal{C}+\sK_1\right\},  
\end{align}
by Theorem 3.6.3 of \cite{ethier2009markov}, $\sK$ is relatively compact set in $D(\R_+; \R^d)$. Combining \eqref{EE22}, \eqref{EE23} and \eqref{EE24}, we get 
$$
\inf_{\P\in \cup_{\nu\in \cK}\Gamma_{\nu}} \P(\sK)\geq 1-\eps,
$$
which implies $\bigcup_{\nu\in \cK}  \Gamma_{\nu}$ is relatively compact in $\sP(D(\R_+; \R^d))$.  Now suppose $\{\P_n\}\subseteq \bigcup_{\nu\in \cK}  \Gamma_{\nu}$, then there exists a subsequence of $\{\P_n\}$, still denote by $\{\P_n\}$ for simple such that $\P_n\Rightarrow \P$ and $\P_n\in \Gamma_{\nu_n}$ with $\nu_n\in \cK$. It is not hard to see, $\{\nu_n\}$ has a unique limit point $\nu\in  \cK$ and one can verify $\P\in \Gamma_\nu$. So $\bigcup_{\nu\in \cK} \Gamma_\nu$ is compact, i.e. the condition 3.1(c) of \cite{costantini2014viscosity} holds.  

\end{proof}
We need the following auxiliary lemma for later use.  
\begin{lemma}\label{zero}
Suppose $f$ is a bounded function with compact support and $u, v$ are functions defined in \eqref{u&v}. Then,   
\begin{align}\label{decay}
\lim_{n\rightarrow \infty} \sup_{|x|\geq n}|v(x)|=\lim_{n\rightarrow \infty} \sup_{|x|\geq n}|u(x)|=0. 
\end{align}
\end{lemma}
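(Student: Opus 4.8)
The plan is to exploit that $f$ is supported on a fixed bounded set together with the boundedness of $b$: if $X_0=x$ lies far from $\mathrm{supp}\,f$, the drift alone moves at speed at most $\|b\|_\infty$ and so needs a time of order $|x|$ to carry the solution back, while on that time scale the supercritical $\a$-stable noise is very unlikely to have moved a distance of order $|x|$. Concretely, fix $R>0$ with $\mathrm{supp}\,f\subseteq B_R:=\{y\in\R^d:|y|<R\}$ and set $M:=\|b\|_\infty$. Since $|f(X_t)|\le\|f\|_\infty\1_{B_R}(X_t)$, for every $\P\in\Gamma_x$ one has
$$\Big|\bE^\P\Big[\int_0^\infty e^{-\lambda t}f(X_t)\,\d t\Big]\Big|\le\|f\|_\infty\,\bE^\P\Big[\int_0^\infty e^{-\lambda t}\1_{B_R}(X_t)\,\d t\Big],$$
hence $\max\{|u(x)|,|v(x)|\}\le\|f\|_\infty\,\sup_{\P\in\Gamma_x}\bE^\P\big[\int_0^\infty e^{-\lambda t}\1_{B_R}(X_t)\,\d t\big]$, and it suffices to show this last quantity tends to $0$ as $|x|\to\infty$.

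Next I would use, exactly as in the proof of Lemma \ref{sub-super}, that under any $\P\in\Gamma_x$ the recovered noise $Z_\cdot:=X_\cdot-x-\int_0^\cdot b(X_s)\,\d s$ has the law of the $\a$-stable process with L\'evy measure $\mu$; in particular $|X_t-x|\le Mt+|Z_t|$ holds $\P$-a.s. For $|x|>R$, put $T:=(|x|-R)/(2M)$ and $a:=(|x|-R)/2$. Then for $t\le T$ the event $\{X_t\in B_R\}$ forces $|Z_t|\ge|x|-R-Mt\ge a$, so that
$$\bE^\P\Big[\int_0^\infty e^{-\lambda t}\1_{B_R}(X_t)\,\d t\Big]\le\int_0^{T}e^{-\lambda t}\,\P(|Z_t|\ge a)\,\d t+\int_T^\infty e^{-\lambda t}\,\d t.$$

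Finally I would estimate the two terms. The second equals $\lambda^{-1}e^{-\lambda T}$, which $\to 0$ as $|x|\to\infty$. For the first, the exact scaling $Z_t\overset{\mathrm d}{=}t^{1/\a}Z_1$ together with the heavy-tail bound $\P(|Z_1|\ge s)\le Cs^{-\a}$ (for $s\ge 1$, and trivially $\P(|Z_t|\ge a)\le 1$) yield $\P(|Z_t|\ge a)\le\min\{1,\,C a^{-\a}t\}$; splitting the $t$-integral at $t=a^{\a}$ then bounds the first term by $C\lambda^{-2}a^{-\a}+\lambda^{-1}e^{-\lambda a^{\a}}$. Collecting the estimates, $\max\{|u(x)|,|v(x)|\}$ is dominated by an explicit function of $|x|$ alone that vanishes as $|x|\to\infty$, which is precisely \eqref{decay}.

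I do not expect a serious obstacle here. The only point requiring a short argument is the tail bound $\P(|Z_1|\ge s)\le Cs^{-\a}$ for the possibly very singular L\'evy measure $\mu$: this follows by decomposing $Z_1$ into its (exponentially integrable) small-jump martingale part and its compound-Poisson large-jump part, the latter having tails controlled by $\mu(\{|y|>s\})=c\,s^{-\a}$, which is immediate from the polar form of $\mu$.
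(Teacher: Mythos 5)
Your proof is correct, but it runs along a different line than the paper's. Both arguments reduce the statement to showing that, for $x$ far from $\mathrm{supp}\,f$, the process under any $\P\in\Gamma_x$ is unlikely to reach $\mathrm{supp}\,f$ before a large time $T$, after which the factor $e^{-\lambda t}$ kills the remaining contribution. The paper handles the exit-probability estimate abstractly inside the martingale problem: it builds a smooth barrier $g^x_R$ with $g^x_R(x)=0$ and $g^x_R\equiv 1$ off $B(x,R)$, applies the optional stopping/martingale identity to $g^x_R$ up to $\tau_R\wedge T$, and bounds $A g^x_R$ directly via Taylor's expansion and the splitting of $\mu$ at a radius $m$; this requires only the martingale problem formulation and never identifies the law of the driving noise. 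You instead peel off the noise by writing $X_t = x + \int_0^t b(X_s)\,\d s + Z_t$ with $Z$ an $\a$-stable L\'evy process (a fact the paper also invokes, but only inside Lemma~\ref{sub-super} for the tightness of $\Gamma_\nu$), and then use the exact self-similarity $Z_t\overset{d}{=}t^{1/\a}Z_1$ together with the standard heavy-tail bound $\P(|Z_1|\geq s)\leq Cs^{-\a}$. Your route yields an explicit quantitative bound of the form $C\lambda^{-2}a^{-\a}+\lambda^{-1}e^{-\lambda a^{\a}}+\lambda^{-1}e^{-\lambda T}$ with $a,T\sim|x|$, whereas the paper's barrier argument is an $\eps$-chase and does not expose a rate. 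The paper's method has the advantage that it would survive if $\mathcal L$ were replaced by a non-scaling generator (e.g., a state-dependent jump kernel), since it uses only boundedness of $Ag^x_R$; your method is tied to the stable scaling but is shorter once the $\a$-stable law of $Z$ is granted. One small caveat worth recording if you wrote this out in full: the identification of $Z_\cdot=X_\cdot-X_0-\int_0^\cdot b(X_s)\,\d s$ as an $\a$-stable L\'evy process under an arbitrary solution of the martingale problem is itself a (standard but nontrivial) semimartingale-characteristics argument, which the paper merely asserts; your proof inherits that dependency, while the paper's proof of this particular lemma is self-contained modulo the martingale property.
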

\begin{proof}
By the above lemma, for any $x\in\R^d$, we can find $(\P_x, X)$ is the solution to the martingale problem $(A,\delta_x)$ and  
$$u(x)=\bE^{\P_x}\left(\int_0^\infty e^{-\lambda t}f(X_t)\d t\right). $$
Assume $f(x)=0, $ if $x\in B^c(0,n)$.  Let $g\in C^2(\R^d)$ satisfies $g(0)=0,~g(y)=1,$ if $y\in B^c(0,1)$.  Define $g_R^x(y):=g(\frac{y-x}{R})$. For any $\eps>0$, choose
\be\label{E-T} T>\left| \frac{\log(\lambda\eps/4\|f\|_\infty)}{\lambda}\right|,\ee
 and $m>0, R>0$, such that 
\be \label{E-m} T\int_{|z|>m}\mu(\d z)\leq \frac{\eps}{4\|g\|_\infty}, \ee
\be \label{E-R} R>\left(\frac{4T \|f\|_\infty\|\nabla^2g\|_\infty\int_{|y|\leq m}|y|^2\mu(\d y)}{\eps \lambda}\right)^{1/2}+\frac{4\|b\|_\infty\|\nabla g\|_\infty}{\eps}.\ee
Let $\tau_R=\inf\{t: X_t\notin B(x,R)\}$. By martingale property, we have 
$$\bE^{\P_x} g^x_R(X_{\tau_R\land T})-\bE^{\P_x} g^x_R(X_0)=\bE^{\P_x} \left[\int_0^{\tau_{R}\land T}Ag_R^x(X_s)\d s\right]. $$
By Taylor's expansion,  
\begin{align*}
A g_R^x(y)=&\frac{1}{2}\int_{\R^d}[g^x_R(y+z)-g^x_R(y)+ g^x_R(y-z)-g^x_R(y)]\mu(\d z)+b(y)\cdot \nabla g^x_R(y) \\
\leq&\|\nabla^2g_R^x\|_\infty \int_{|z|\leq m}|z|^2\mu(\d z)+\|g^x_R\|_\infty\int_{|z|>m}\mu(\d z)+\|b\|_\infty\|\nabla g^x_R\|_\infty\\
\leq&\frac{\|\nabla^2g\|_\infty}{R^2} \int_{|z|\leq m} |z|^2\mu(\d z)+\|g\|_\infty\int_{|z|>m}\mu(\d z)+\frac{\|b\|_\infty\|\nabla g\|_\infty}{R}. 
\end{align*}
By above inequality and \eqref{E-T}, \eqref{E-m}, \eqref{E-R}, 
\begin{align}
 \no \P_x({\tau_R}\leq T)\leq& \bE^{\P_x} g^x_R(X_{\tau_R\land T})=\bE^{\P_x} \int_0^{\tau_{R}\land T} Ag^x_R(X_s)\d s\\ \no
\leq& T\frac{\|\nabla^2 g\|_{\infty}}{R^2} \int_{|y|\leq m}|y|^2\mu(\d y)+T\|g\|_\infty\int_{|z|>m}\mu(\d z)+\frac{T\|b\|_\infty\|\nabla g\|_\infty}{R}\\
<&\frac{3\lambda\eps}{4\|f\|_\infty}\label{Tau-T}
\end{align}
For any fixed $x\in B^c(0, {n+R})$, since $f(X_t)=0$ if $t\leq T\wedge \tau_R$, we have 
$$|u(x)|=\int_0^\infty e^{-\lambda t} \bE^{\P_x} |f(X_t)|\d t\leq \P_x({\tau_R}\leq T) \frac{\|f\|_\infty}{\lambda}+\int_T^\infty e^{-\lambda t}\|f\|_\infty \d t.$$
Combining above inequlity and \eqref{E-T}, \eqref{Tau-T}, we get $|u(x)|<\eps$ for any $|x|>n+R$. So we complete our proof. 
\end{proof}
Now we give the comparison principle for the resolvent equation \eqref{Resolvent eq}, by which we can get the two functions defined in \eqref{u&v} are equivalent. 

\begin{lemma}\label{Comparison}
 Suppose $\mu$ satisfies {\bf (H$^\theta_\kappa$)}, then there exists a constant $\eps_0>0$ such that if $b$ satisfies 
 $$[b]_{1-\a}:=\sup_{x,y\in\R^d}\frac{|b(x)-b(y)|}{|x-y|^{1-\a}} < \varepsilon_0,$$ 
then for any subsolution $u$ and supersolution $v$ to the resolvent equation \eqref{Resolvent eq}, we have $u\leq v$, provided that $f\in C^1_c(\R^d)$ and $u,v$ satisfy \eqref{decay}. 
\end{lemma}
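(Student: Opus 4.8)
The plan is to argue by contradiction via a doubling-of-variables argument whose only nonstandard feature is the penalization, which must be adapted to the supercritical scaling of the drift. Assume $M:=\sup_{\R^d}(u-v)>0$. Since $u$ is u.s.c., $v$ is l.s.c.\ and, by \eqref{decay}, $u-v\to 0$ at infinity, $M$ is attained. For $\eps\in(0,1)$ set
$$\Phi_{\eps}(x,y):=u(x)-v(y)-\eps^{-1}\Psi(x-y),\qquad \Psi(w):=|w|^{p},$$
where $p\in(0,1)$ is fixed once and for all so small that $\theta<\arccos\sqrt{1/(2-p)}$; such $p$ exists precisely because $\theta<\pi/4=\lim_{p\downarrow 0}\arccos\sqrt{1/(2-p)}$, and the hypothesis $\theta>\arccos\sqrt{1/(2-\a)}$ guarantees compatibility of the whole construction. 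Boundedness of $u,v$ together with \eqref{decay} forces $\Phi_{\eps}$ to attain its supremum $M_{\eps}\ge M$ at some $(x_{\eps},y_{\eps})$ in a bounded region independent of $\eps$ (after, if necessary, a routine localization), and the standard penalization lemma gives $\eps^{-1}|x_{\eps}-y_{\eps}|^{p}\to 0$, $u(x_{\eps})-v(y_{\eps})\to M$ and $q_{\eps}:=x_{\eps}-y_{\eps}\to 0$; a small perturbation reduces matters to $q_{\eps}\ne 0$, so $\Psi(\cdot-y_{\eps})$ is a bona fide $C^{2}$ test function near $x_{\eps}$ and $-\Psi(x_{\eps}-\cdot)$ near $y_{\eps}$.

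Using the localized notion of viscosity sub/supersolution (Definition \ref{Def[BI]}, equivalent to Definition \ref{Def[CK]} by Proposition \ref{Equivalent}), with a cut-off radius $\delta>0$ to be chosen, and subtracting the inequality for $u$ at $x_{\eps}$ from the one for $v$ at $y_{\eps}$, one obtains
$$\lambda\big(u(x_{\eps})-v(y_{\eps})\big)\le \big(f(x_{\eps})-f(y_{\eps})\big)+\eps^{-1}\big(b(x_{\eps})-b(y_{\eps})\big)\cdot\nabla\Psi(q_{\eps})+\eps^{-1}\mathcal J^{\delta}_{\eps}+\big(\mathcal{L}_{\delta}u(x_{\eps})-\mathcal{L}_{\delta}v(y_{\eps})\big),$$
where $\mathcal J^{\delta}_{\eps}:=\int_{|z|\le\delta}\big[\Psi(q_{\eps}+z)+\Psi(q_{\eps}-z)-2\Psi(q_{\eps})\big]\mu(\d z)$ (using symmetry of $\mu$ to discard the gradient corrections). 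Three elementary remarks: $|f(x_{\eps})-f(y_{\eps})|\le\|\nabla f\|_{\infty}|q_{\eps}|\to0$ since $f\in C^1_c$; since $\Phi_{\eps}(x_{\eps}+z,y_{\eps}+z)\le M_{\eps}=\Phi_{\eps}(x_{\eps},y_{\eps})$ (the penalization depends on $x-y$ only, so the $\eps^{-1}\Psi$ terms cancel), one gets $\mathcal{L}_{\delta}u(x_{\eps})-\mathcal{L}_{\delta}v(y_{\eps})=\int_{|z|>\delta}\big([u(x_{\eps}+z)-u(x_{\eps})]-[v(y_{\eps}+z)-v(y_{\eps})]\big)\mu(\d z)\le 0$ for every $\delta$; and, because $\nabla\Psi(q_{\eps})=p\,|q_{\eps}|^{p-2}q_{\eps}$ is parallel to $q_{\eps}$, the drift term is at most $p\,[b]_{1-\a}\,\eps^{-1}|q_{\eps}|^{p-\a}$, where $[b]_{1-\a}$ may be taken arbitrarily close to the limit in \eqref{b-condition} since $|q_\eps|\to0$.

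The heart of the matter is the estimate of $\mathcal J^{\delta}_{\eps}$, which is where {\bf (H$^{\theta}_{\kappa}$)} enters. One chooses $\delta$ comparable to $|q_{\eps}|$ (say $\delta=c_*|q_{\eps}|$, $c_*$ small); then, by Taylor expansion, $\Psi(q_{\eps}+z)+\Psi(q_{\eps}-z)-2\Psi(q_{\eps})$ is, up to harmless bounded factors, $z^{\top}\nabla^{2}\Psi(q_{\eps})z=p\,|q_{\eps}|^{p-2}\big(|z|^{2}+(p-2)(\widehat{q_{\eps}}\cdot z)^{2}\big)$, which is strictly negative exactly for $z$ making angle less than $\arccos\sqrt{1/(2-p)}$ with $q_{\eps}$. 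Invoking {\bf (H$^{\theta}_{\kappa}$)} with $d=q_{\eps}$ yields a cone $S(\pmb n_{q_{\eps}},\theta)$ containing $q_{\eps}$, all of whose directions lie in this "negative" range by the choice of $p$, and carrying mass $\int_{B_{\delta}\cap S(\pmb n_{q_{\eps}},\theta)}|z|^{2}\mu(\d z)\ge\kappa\delta^{2-\a}$; one then estimates the contribution of the remaining directions crudely and absorbs any positive leftover into $\mathcal{L}_{\delta}u(x_{\eps})-\mathcal{L}_{\delta}v(y_{\eps})\le 0$, the net outcome being $\eps^{-1}\mathcal J^{\delta}_{\eps}+\big(\mathcal{L}_{\delta}u(x_{\eps})-\mathcal{L}_{\delta}v(y_{\eps})\big)\le -c\,\kappa\,\eps^{-1}|q_{\eps}|^{p-\a}$ for some $c=c(p,\theta,\a,\mu)>0$ (here $\delta^{2-\a}=c_*^{2-\a}|q_{\eps}|^{2-\a}$ turns $|q_{\eps}|^{p-2}\delta^{2-\a}$ into $|q_{\eps}|^{p-\a}$, matching the homogeneity of the drift term). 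Feeding this back,
$$\lambda\big(u(x_{\eps})-v(y_{\eps})\big)\le o(1)+\big(p\,[b]_{1-\a}-c\,\kappa\big)\,\eps^{-1}|q_{\eps}|^{p-\a}.$$
Since $|q_{\eps}|\to0$ while $\eps^{-1}|q_{\eps}|^{p}\to0$, the quantity $\eps^{-1}|q_{\eps}|^{p-\a}\to+\infty$; hence with $\eps_{0}:=c\,\kappa/p$, the smallness $[b]_{1-\a}<\eps_{0}$ makes the right-hand side tend to $-\infty$, contradicting $u(x_{\eps})-v(y_{\eps})\to M>0$. Therefore $M\le0$, i.e.\ $u\le v$.

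The step I expect to be the genuine obstacle is the lower bound on the net nonlocal contribution: one must show that the negativity harvested from {\bf (H$^{\theta}_{\kappa}$)} along the cone around $q_{\eps}$ not only has the correct homogeneity $\eps^{-1}|q_{\eps}|^{p-\a}$ but carries a constant large enough to dominate both the "transverse" (positive-curvature) directions of $\Psi$ near $q_{\eps}$ and the Hölder drift term simultaneously. This balancing is exactly what forces $\theta$ (hence the penalization power $p$) into the stated range and fixes the threshold $\eps_{0}$; making it work in full generality — in particular for very spread-out $\mu$ — may well require a more finely tailored penalization than the pure power $|w|^{p}$ sketched here, together with careful bookkeeping of the cut-off scale $\delta$ against $|q_{\eps}|$ and of the constants appearing in the Taylor remainders. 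The remaining ingredients (semicontinuity/doubling, compactness of $\{(x_{\eps},y_{\eps})\}$ via \eqref{decay}, the reduction $q_{\eps}\ne0$) are standard.
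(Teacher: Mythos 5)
Your proposal takes a genuinely different route from the paper's, and the difference is exactly where your argument has gaps. The paper runs a \emph{two-stage} doubling. Stage 1 proves a quantitative H\"older-type bound $u(x)-v(y)\le M+L|x-y|^{\gamma}$ with an exponent $\gamma>\alpha$, by doubling with the singular penalization $L|x-y|^{\gamma}$ under the working hypothesis $M_L>0$, which automatically forces $\hat x\ne\hat y$ (so the singularity of $|\cdot|^{\gamma}$ at the origin never arises); crucially, the test-function cut-off $\delta'$ is sent to $0$, so the cone estimate is applied \emph{not} to $\mathcal{L}^{\delta}$ of the test function but to the increments of $u,v$ themselves through the optimality inequalities $u(\hat x+z)-u(\hat x)-p\cdot z\le L\big(|d+z|^{\gamma}-|d|^{\gamma}-\gamma|d|^{\gamma-2}d\cdot z\big)$, with directions outside the cone handled by the cruder optimality bound $I_2''\le0$. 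Stage 2 re-doubles with the smooth quadratic penalization $|x-y|^2/(2\eps)$ (no singularity on the diagonal); the H\"older bound gives $|\bar x_\eps-\bar y_\eps|^{2-\gamma}\le 2L\eps$, hence $|\bar x_\eps-\bar y_\eps|^{2-\alpha}/\eps\le(2L\eps)^{(2-\alpha)/(2-\gamma)}\to0$ precisely because $\gamma>\alpha$, which yields $\lambda M\le0$. Your one-stage doubling with $\Psi(w)=|w|^{p}$, $p<\alpha$, matches drift and nonlocal terms at the single scale $\eps^{-1}|q_\eps|^{p-\alpha}$ and is an appealing idea, but both of the steps it elides are the ones the paper's two-stage structure is engineered to handle.

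First, you cannot rule out $q_\eps=0$. For $p<1$ the penalization $|w|^{p}$ is not even $C^{1}$ at the origin, so if the penalized maximum falls on the diagonal there is no admissible test function and no viscosity inequality can be written; ``a small perturbation reduces matters to $q_\eps\ne0$'' is not a proof. Relatedly, ``$|q_\eps|\to0$ and $\eps^{-1}|q_\eps|^{p}\to0$ imply $\eps^{-1}|q_\eps|^{p-\alpha}\to\infty$'' is a non sequitur: $\eps^{-1}|q_\eps|^{p-\alpha}=(\eps^{-1}|q_\eps|^{p})\cdot|q_\eps|^{-\alpha}$ is a vanishing factor times a diverging one. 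What you actually need is $p<\alpha$ \emph{and} $q_\eps\ne0$, and the latter is again the gap. Second, and more seriously, your treatment of the nonlocal term loses control over the transverse directions. With $\delta=c_*|q_\eps|$ and the test function inserted into $\mathcal{L}^{\delta}$, the integrand $\Psi(q_\eps+z)+\Psi(q_\eps-z)-2\Psi(q_\eps)\approx p|q_\eps|^{p-2}\big(|z|^2+(p-2)(\widehat{q_\eps}\cdot z)^{2}\big)$ is \emph{positive} of size $p|q_\eps|^{p-2}|z|^{2}$ for $z$ nearly orthogonal to $q_\eps$, and its integral over $B_\delta\setminus S(\pmb n_{q_\eps},\theta)$ is of the \emph{same} order $\eps^{-1}|q_\eps|^{p-\alpha}$ as your negative cone contribution. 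Hypothesis (H$^\theta_\kappa$) gives only a lower bound on the cone mass and says nothing relating $\kappa\big((2-p)\cos^{2}\theta-1\big)$ to the total second moment $\int_{B_\delta}|z|^{2}\mu(\d z)/\delta^{2-\alpha}$, so the sign of $\eps^{-1}\mathcal J^{\delta}_{\eps}$ is not under control. You also cannot ``absorb the positive leftover into $\mathcal{L}_\delta u(x_\eps)-\mathcal{L}_\delta v(y_\eps)\le0$'': that term pertains to $|z|>\delta$, not to transverse $z\in B_\delta$, and it is merely $\le0$, not quantitatively negative. The fix is the paper's: send $\delta'\to0$ so the test function never enters the nonlocal integral at scale $|q_\eps|$, and replace $\mathcal J^{\delta}_{\eps}$ by increments of $u,v$ controlled via the two optimality inequalities, separately on and off the cone.

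A further remark on the constants: the paper's proof requires $\gamma\in(\alpha,\,2-1/\cos^{2}\theta)$ to be nonempty, i.e.\ $\theta<\arccos\sqrt{1/(2-\alpha)}$, which is opposite in sign to the condition written in (H$^\theta_\kappa$); this looks like a typo in the hypothesis. Your reading of (H$^\theta_\kappa$) to deduce $p<\alpha$ is one coherent way to reconcile it, but it is not what the paper's argument actually uses.
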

\begin{proof}
Suppose $u,v$ are subsoltion and supersolution to resolvent equation \eqref{Resolvent eq} respectively. Denote 
$$M: =\sup_{x\in\R^d} \{u(x)-v(x)\}.$$

{\bf Claim:} for any fixed $\gamma\in (\alpha,2-\tfrac{1}{\cos^2\theta})$, there exits a constant $\eps_0>0$ such that if $[b]_{1-\a}<\eps_0$,  then 
\begin{equation}\label{Holder}
u(x)-v(y)\leq M+ L|x-y|^\gamma, \end{equation}
for some sufficient large $L$. 
\medskip

Let us show the proof of \eqref{Holder}: fixing $\gg\in \big(\alpha,2-\tfrac{1}{\cos^2\theta}\big)$ and $\eta\in(0,1)$ such that 
\be\label{E-eta}
(1+\eta)^{\gamma-2}(2-\gamma)\cos^2\theta-(1-\eta)^{\gamma-2}>0. 
\ee
Assume \eqref{Holder} does not hold, then for any $L>0$, 
$$M_L:=\sup_{x,y\in \R^d}\{u(x)-v(y)-L|x-y|^{\gamma}\}-M>0. $$
Indeed, the superum can be reached at some point $(\hat{x},\hat{y})$.  Suppose $u(x_n)-v(y_n)-L|x_n-y_n|^{\gamma}-M\rightarrow M_L$, by \eqref{decay} and the assumption   $M_L>0$, we get $x_n, y_n$ must be bounded, say $|x_n|+|y_n|\leq K$. Since $u(x)-v(y)-L|x-y|^{\gamma}$ is upper semi-continuous, there exit $\hat{x}, \hat{y}\in \bar{B}_{K}$ such that $M_L=u(\hat{x})-v(\hat{y})-L|\hat{x}-\hat{y}|^{\gamma}-M$.  Now denote $d:=\hat{x}-\hat{y}\neq 0$.  Noticing $u$ and $v$ are bounded,  this  implies $d$ will go to $0$ as $L$ goes to $\infty$. Let  $\phi(x,y):=L|x-y|^\gg$, 
$\hat{\phi}(x):=\phi(x,\hat{y})$, $-\hat{\psi}(y):=-\phi(\hat{x},y)$ and $p:=\nabla\hat{\phi}(\hat{x})=\nabla(-\hat{\psi}(\hat{y}))=\gamma L |\hat{x}-\hat{y}|^{\gamma-2} (\hat{x}-\hat{y})=\gamma L |d|^{\gamma-2}d$.  For any $\delta'\leq \frac{1}{2}\eta |d|$,  noticing $\hat{\phi}(x)$ is smooth in $B(\hat{x}, \delta')$ and $-\hat{\psi}(y)$ is smooth near $B(\hat{y}, \delta')$, so by Definition \ref{Def[BI]}, we have 
\begin{equation}\label{Sum}
\begin{split}
0<\lambda (M_L+M)&\leq \lambda[u(\hat{x})-v(\hat{y})]\\
&\leq [f(\hat{x})+b(\hat{x})\cdot p+\mathcal{L}^{\delta'}\hat{\phi}(\hat{x})+\mathcal{L}_{\delta'}u(\hat{x})]\\
&-[f(\hat{y})+b(\hat{y})\cdot p+\mathcal{L}^{\delta'}(-\hat{\psi}(\hat{y}))+\mathcal{L}_{\delta'}v(\hat{y})]\\
&\leq\|\nabla f\|_\infty \cdot|d|+[b]_{1-\a}|d|^{{1-\a}}|p|+(\mathcal{L}^{\delta'}\hat{\phi}(\hat{x})-\mathcal{L}^{\delta'}(-\hat{\psi}(\hat{y})))+(\mathcal{L}_{\delta'}u(\hat{x})-\mathcal{L}_{\delta'}v(\hat{y}))\\
&= \|\nabla f\|_\infty \cdot|d|+[b]_{1-\a}|d|^{{1-\a}}|p|+r(\delta')+I_1+I_2, 
\end{split}
\end{equation}
here we denote 
$$
r(\delta'):= \mathcal{L}^{\delta'}\hat{\phi}(\hat{x})-\mathcal{L}^{\delta'}(-\hat{\psi}(\hat{y})), 
$$
$$I_1:=\int_{B^c_1}[(u(\hat{x}+z)-u(\hat{x}))-(v(\hat{y}+z)-v(\hat{y})]\mu(\d z), $$
and 
$$I_2:=\int_{B_1\cap B_{\delta'}^c} [(u(\hat{x}+z)-u(\hat{x}))-(v(\hat{y}+z)-
v(\hat{y}))]\mu(\d z). $$
By definition, we have
\begin{equation}\label{r}
\begin{split}
r(\delta')= & \mathcal{L}^{\delta'}\hat{\phi}(\hat{x})-\mathcal{L}^{\delta'}(-\hat{\psi}(\hat{y}))\\
=& CL\int_{|z|\leq \delta'}[|d+z|^\gamma+|d-z|^{\gamma}-2|d|^\gamma] \mu(\d z)\\
\leq& C(d)\int_{|z|\leq \delta'} |z|^2\mu(\d z)\rightarrow 0 \,\,(\delta' \to0). 
\end{split}
\end{equation}
Noticing $u(x)-v(y)-L|x-y|^\gamma$ reaches its maximal at point $(\hat{x}, \hat{y})$, we have 
$u(\hat{x})-v(\hat{y}) \geq u(\hat{x}+z)-v(\hat{y}+z)$. By this, we get 
\begin{equation}\label{I1}I_1\leq 0. \end{equation}

Now let $\delta:=|d|\eta\geq 2\delta'$ and  $S(\pmb{n}_d, \theta, \delta):=\{z: |z|\leq \delta, z\in S(\pmb{n}_d,\theta)\}$, denote 
$$I_2':=\int_{B_1\cap B_{\delta'}^c\cap S(\pmb{n}_d, \theta, \delta)} \left[(u(\hat{x}+z)-u(\hat{x})-p\cdot z)-(v(\hat{y}+z)-v(\hat{y})-p\cdot z)\right]\mu(\d z), $$
\begin{align*}I_2'':=\int_{B_1\cap B_{\delta'}^c\backslash S(\pmb{n}_d, \theta, \delta)} \left[(u(\hat{x}+z)-u(\hat{x}))-(v(\hat{y}+z)-v(\hat{y}))\right]\mu(\d z). \end{align*}
Recalling that $u(\hat{x})-v(\hat{y}) \geq u(\hat{x}+z)-v(\hat{y}+z)$, we get 
\begin{align}\label{I2''}
I_2''\leq 0. 
\end{align}
And since $u(\hat{x}+z)-v(\hat{y})-L|d+z|^\gamma\leq u(\hat{x})-v(\hat{y})-L|d|^\gamma$, we have
$$u(\hat{x}+z)-u(\hat{x})-p\cdot z\leq L|d+z|^\gamma-L|d|^\gamma-p\cdot z=L|d+z|^\gamma-L|d|^\gamma-\gamma L |d|^{\gamma-2}d\cdot z. $$
Similarly, we have 
$$-(v(\hat{y}+z)-v(\hat{y})-p\cdot z)\leq L|d-z|^\gamma-L|d|^\gamma+\gamma L |d|^{\gamma-2}d\cdot z. $$
For any $z\in S(\pmb{n}_d, \theta, \delta)$, by Taylor expansion,
\begin{align*}
&u(\hat{x}+z)-u(\hat{x})-p\cdot z\\
\leq&L|d+z|^\gamma-L|d|^\gamma-\gamma L |d|^{\gamma-2}d\cdot z\\
\leq &L\sup_{0<t<1}\left\{\gamma |d+tz|^{\gamma-2}|z|^2+\gamma(\gamma-2)|d+tz|^{\gamma-4}((d+tz)\cdot z)^2\right\}\\
=&L\sup_{0<t<1} \left\{\gamma|d+tz|^{\gamma-4}[|d+tz|^2|z|^2-(2-\gamma)((d+tz)\cdot z)^2]\right\}\\
\leq& -L\gamma|z|^2|d|^{\gamma-2}\left((2-\gamma)(1+\eta)^{\gamma-2}\cos^2\theta-(1-\eta)^{\gamma-2}\right). 
\end{align*}
Similarly, for any $z\in S(\pmb{n}_d, \theta, \delta)$, 
\begin{align*}
&-(v(\hat{y}+z)-v(\hat{y})-p\cdot z)\\
\leq&  L|d-z|^\gamma-L|d|^\gamma+\gamma L |d|^{\gamma-2}d\cdot z\\
\leq& -L\gamma|z|^2|d|^{\gamma-2}\left((2-\gamma)(1+\eta)^{\gamma-2}\cos^2\theta-(1-\eta)^{\gamma-2}\right). 
\end{align*}
Hence, by assumption {\bf (H$^\theta_\kappa$)}, 
\begin{equation}\label{I2'}
\begin{split}
I'_2&\leq -2L\gamma |d|^{\gamma-2}((1+\eta)^{\gamma-2}(2-\gamma)\cos^2\theta-(1-\eta)^{\gamma-2})\int_{B_{\delta'}^c\cap S(\pmb{n}_d, \theta, \delta)}|z|^2\mu(\d z)\\
&\leq -2L\gamma ((1+\eta)^{\gamma-2}(2-\gamma)\cos^2\theta-(1-\eta)^{\gamma-2})\cdot \kappa|d|^{\gamma-2}\delta^{2-\alpha}+R(\delta')\\
&= -\eps_0L|d|^{\gamma-\alpha}+R(\delta'),
\end{split}
\end{equation}
here $$\eps_0:=\eps_0(\gamma,\eta,\kappa,\alpha,\theta)=2 \kappa \eta^{2-\a}\gamma \{(1+\eta)^{\gamma-2}(2-\gamma)\cos^2\theta-(1-\eta)^{\gamma-2}\}>0,$$ 
and $R(\delta')\rightarrow 0 \,\,(\delta'\rightarrow 0)$. Combining \eqref{Sum}, \eqref{r}, \eqref{I1}, \eqref{I2''}, \eqref{I2'} and let $\delta'\rightarrow 0$, we obtain 
 
$$0<\lambda(M_L+M)\leq\|\nabla f\|_\infty \cdot|d|+L([b]_{1-\a}-\eps_0)|d|^{\gamma-\alpha}. $$
Recalling that $d=d(L)\rightarrow 0$ as $L\rightarrow \infty$. If $[b]_{1-\a}<\eps_0$ and $L$ sufficient large, we obtain 
$$\|\nabla f\|_\infty \cdot|d|>L(\eps_0-[b]_{1-\a})|d|^{\gamma-\alpha}, $$
which is a contradiction. So we complete the proof of our claim. 

Now assume $M>0$. For any $\eps>0$, let $$M'_\eps:=\sup_{x,y\in \R^d}\left\{u(x)-v(y)-\frac{|x-y|^2}{2\eps}\right\}>0,$$
by Lemma \ref{zero} and the argument above, we know that there exist $\bar{x}_\eps, \bar{y}_\eps\in\R^d$ such that $u(\bar{x}_\eps)-v(\bar{y}_\eps)-\frac{|\bar{x}_\eps-\bar{y}_\eps|^2}{2\eps}=M'_\eps>0$. Hence $$u(\bar{x}_\eps)-v(\bar{y}_\eps)-\frac{|\bar{x}_\eps-\bar{y}_\eps|^2}{2\eps}\geq \max_{x}\{u(x)-v(x)\}=M>0.$$
On the other hand, \eqref{Holder} implies
$$u(\bar{x}_\eps)-v(\bar{y}_\eps)-\frac{|\bar{x}_\eps-\bar{y}_\eps|^2}{2\eps}\leq M+L|\bar{x}_\eps-\bar{y}_\eps|^\gamma-\frac{|\bar{x}_\eps-\bar{y}_\eps|^2}{2\eps}$$
Therefor, 
\be\label{x-y}
|\bar{x}_\eps-\bar{y}_\eps|^{2-\gamma}\leq 2L\eps. 
\ee
Let $\bar{\phi}(x)=\frac{|x-\bar{y}_\eps|^2}{2\eps},\  \bar{\psi}(y)=\frac{|\bar{x}_\eps-y|^2}{2\eps}, \ \bar{p}:= \nabla \bar{\phi}(\bar{x}_\eps)=\nabla(-\bar{\psi}(\bar{y}_\eps))=\frac{\bar{x}_\eps-\bar{y}_\eps}{\eps}$. By the definition of viscosity sub(super)solution, 
\begin{equation*}
\begin{split}
0<\lambda M \leq& \lambda[u(\bar{x}_\eps)-v(\bar{y}_\eps)]\\
\leq & [f(\bar{x}_\eps)+b(\bar{x}_\eps)\cdot \bar{p}+\mathcal{L}^{\delta'}\bar{\phi}(\bar{x}_\eps)+\mathcal{L}_{\delta'}u(\bar{x}_\eps)]\\
&-[f(\bar{y}_\eps)+b(\bar{y}_\eps)\cdot \bar{p}+\mathcal{L}^{\delta'}(-\bar{\psi}(\bar{y}_\eps))+\mathcal{L}_{\delta'}v(\bar{y}_\eps)]\\
\leq& |f(\bar{x}_\eps)-f(\bar{y}_\eps)|+[b]_{1-\a}|\bar{x}_\eps-\bar{y}_\eps|^{{1-\a}}|\bar{p}|\\
&+(\mathcal{L}^{\delta'}\bar{\phi}(\bar{x}_\eps)-\mathcal{L}^{\delta'}(-\bar{\psi}(\bar{y}_\eps)))+(\mathcal{L}_{\delta'}u(\bar{x}_\eps)-\mathcal{L}_{\delta'}v(\bar{y}_\eps)). 
\end{split}
\end{equation*}
Just as the proof for \eqref{Holder} above, we can verify that the summation of the last two terms in the last inequality above is negative. So
$$0<\lambda M \leq |f(\bar{x}_\eps)-f(\bar{y}_\eps)|+ \frac{[b]_{1-\a} |\bar{x}_\eps-\bar{y}_\eps|^{2-\a}}{\eps}. $$
By \eqref{x-y} and noticing $2-\alpha>2-\gamma$, we get $|\bar{x}_\eps-\bar{y}_\eps|^{2-\a}/{\eps}\leq (2L\eps)^{(2-\a)/(2-\gg)}\rightarrow 0$, as $\eps\to 0$, which implies $M=0$. We complete the proof. 
\end{proof}
Now we give the proof of our main result.
\begin{proof}[Proof of Theorem \ref{Weak uniqueness}]
Suppose $\mu$ satisfies $({\bf{H}^\theta_\kappa})$ and $[b]_{1-\a}<\eps_0.$
Using lemma \ref{sub-super} and \ref{Comparison}, we get for any $\lambda>0$, $x\in\R^d$, $f\in C_c^1(\R^d)$ and $\P_x, \P'_x\in \Gamma_x$
$$\bE^{\P_x}\left[\int_0^\infty e^{-\lambda t}f(X_t)\d t\right]=\bE^{\P'_x}\left[\int_0^\infty e^{-\lambda t}f(X_t)\d t\right].$$
 By the uniqueness of Laplace transformation, we get for almost everywhere $t\geq 0$, $\P_x(X_t\in\cdot)=\P'_x(X_t\in\cdot)$. The right continuous of $X_t$ shows $\P_xX_t^{-1}=\P'_xX_t^{-1}$ for any $t\geq 0$. Using Theorem 4.4.2 of \cite{ethier2009markov}, weak uniqueness in this case follows. By standard localization technique(cf. \cite[Chapter 4]{ethier2009markov}), we get the uniqueness under assumption \eqref{b-condition}. 
\end{proof}

\section*{Acknowledgements}
The authors gratefully acknowledge Professor Zhen-Qing Chen for his great help and stimulating discussions. This work was supported by National Postdoctoral Program for Innovative Talents (201600182) of China.

\end{document}